\newtheorem{theorem}{Theorem}
\theoremstyle{plain}
\newtheorem{corollary}{Corollary}
\newtheorem{proposition}{Proposition}
\numberwithin{equation}{section}
\begin{document}
\title[Geometric Configuration of Riemannian Submanifolds...]{Geometric Configuration of Riemannian Submanifolds of arbitrary Codimension}
\author{Mohamed Abdelmalek}
\curraddr{Laboratoire Syst\`{e}mes Dynamiques et Applications\\
Universit\'{e} Aboubakr Belkaid, Tlemcen and Preparatory \\
School of Economy; Business and Management Sciences, \\
Tlemcen Algeria.}
\email{abdelmalekmhd@yahoo.fr}
\author{Mohammed Benalili}
\curraddr{Laboratoire Syet\`{e}mes Dynamiques et Applications \\
Dept. Mathematics, Faculty of Sciences\\
University Abou-bakr Belka\"{\i}d Tlemcen\\
Algeria.}
\email{m\_benalili@mail.univ-tlemcen.dz}
\author{Kamil Niedzia\l omski}
\curraddr{Department of Mathematics and Computer Science\\
University of Lodz\\
Banacha 22, 90-238 \L \'{o}d\'{z}\\
Poland.}
\email{kamiln@math.uni.lodz.pl}
\subjclass[2000]{53A10, 53C42 ; Secondary 53A10}
\keywords{Geometric Configuration, Generalized Newton Transformations, Ellipticity, Transversality.}

\begin{abstract}
In this paper we study a geometric configuration of submanifolds of arbitrary
codimension in an ambient Riemannian space. We obtain relations between the
geometry of a $q$-codimension submanifold $M^{n}$ along its boundary and the
geometry of the boundary $\Sigma^{n-1}$of $M^{n}$ as an hypersuface of a
$q$-codimensional submanifold $P^{n}$ in an ambient space $\overline{M}^{n+q}%
$. As a consequence of these geometric ralations we get that the ellipticity
of the generalized Newton transformations implies the tranversality of $M^{n}$
and $P^{n}$ in $P^{n}$ is totally geodesic in $\overline{M}^{n+q}$.

\end{abstract}
\maketitle

\section{Introduction}

Let $\overline{M}^{n+q}$ be $n+q$-dimensional connected and orientable
Riemannian manifold with metric $\left\langle .,.\right\rangle $ and
Levi-Civita connection $\overline{\nabla}$. Denote by $P^{n}$ an oriented
connected $n$- submanifold of $\overline{M}^{n+q}$ and consider $\Sigma^{n-1}
$ an $n-1$-compact hypersurface of $P^{n}$. If $\Psi:M^{n}\rightarrow
\overline{M}^{n+q}$ is an oriented connected and compact submanifold of
$\overline{M}^{n+q}$ with boundary $\partial M$. $M^{n}$ will be said
submanifold of $\overline{M}^{n+q}$ with boundary $\Sigma^{n-1}$ if the
restriction of $\Psi$ to $\partial M$ is a diffeomorphism onto $\Sigma^{n-1}$.
A natural question would be: How can one describe the geometry of $M^{n}$
along its boundary $\partial M$ with respect to the geometry of the inclusion
$P^{n}\subset\overline{M}^{n+q}$? A partial answer to this question is given
by the following formula, obtained in this paper, which holds along the
boundary $\partial M$: for any multi-index $u=\left(  u_{1},...,u_{q}\right)
$ with length $\left\vert u\right\vert \leq n-1$,
\begin{equation}
\langle T_{u}\nu,\nu\rangle=\frac{1}{n-1-|u|}\sum_{l\leq u}\binom
{n-1-|l|}{\left\vert u\right\vert -l}\rho^{l}\mu^{u-l}\sigma_{|l|}(A_{\Sigma
}). \label{00}%
\end{equation}
where $\left(  T_{u}\right)  _{u}$ stands for the family of the generalized
Newton transformations introduced in \cite{4} associated to the matrix
$A=\left(  A_{1},...,A_{q}\right)  $; \ $\left(  A_{\alpha}\right)
_{\alpha\in\left\{  1,...,q\right\}  }$ is a system of matrices of the shape
operators corresponding to a normal basis to the manifold $\ M^{n}$ \ and
$\tilde{\sigma}_{u}=\tilde{\sigma}_{u}(A_{1}|_{\Sigma},...,A_{q}|_{\Sigma})$
are the coefficients of the Newton polynomial $P_{\widetilde{A}}:%
\mathbb{R}
^{q}\longrightarrow%
\mathbb{R}
$ defined by%
\[
P_{\widetilde{A}}(t)=\sum_{\left\vert u\right\vert \leq n-1}\tilde{\sigma}%
_{u}t^{u}%
\]
where $\widetilde{A}=(A_{1}|_{\Sigma},...,A_{q}|_{\Sigma})$ , \ $A_{\alpha
}|_{\Sigma}$ is the restriction of $A_{\alpha}$ to $\Sigma^{n-1}$ and
$\sigma_{r}(A_{\Sigma})$ is the symmetric function coefficient of $t^{r}$ in
the characteristic polynomial of the matrix $A_{\Sigma}$.

In \cite{2} Al\'{\i}as and Malacarne considered the above geometric
configuration to the case of hypersurfaces where $\Sigma_{n-1}$ is an $\left(
n-1\right)  $-dimensional compact submanifold contained in an hyperplane $\Pi$
of $%
\mathbb{R}
^{n+1}$ and $M^{n}$ stands for a smooth compact, connected and oriented
manifold with boundary $\partial M^{n}$. Moreover $M^{n}$ is an hypersurface
of $%
\mathbb{R}
^{n+1}$ with boundary $\Sigma_{n-1}$ in the sense that there exists
$\psi:M^{n}\rightarrow R^{n+1}$ an oriented hypersurface immersed in $%
\mathbb{R}
^{n+1}$ such that the restriction of $\psi$ to the boundary $\partial M^{n}$
is a diffeomorphism onto $\Sigma_{n-1}$. They showed that along the boundary
$\partial M^{n}$, for every $1\leq r\leq n-1$:%
\begin{equation}
\left\langle T_{r}\nu,\nu\right\rangle =\left(  -1\right)  ^{r}s_{r}%
\left\langle a,\nu\right\rangle ^{r}\label{01}%
\end{equation}
where $\nu$ stands for the outward pointing unit conormal vector field along
$\partial M^{n}$ while $T_{r}$ denotes the classical Newton transformation,
$a\in R^{n+1}$ such that $\Pi=a^{\bot}$ and $s_{r}$ is the $r$-th symmetric
function of the principal curvatures of $\Sigma_{n-1}$ with respect to $\nu$.
In \cite{3} Al\'{\i}as, de Lira and Malacarne studied the question in the
context of an $(n+1)$-dimensional connected oriented ambient Riemannian
manifold $\overline{M}^{n+1}$, they established that along the boundary
$\partial M^{n}$, for every $1\leq r\leq n-1$:%
\begin{equation}
\left\langle T_{r}\nu,\nu\right\rangle =\left(  -1\right)  ^{r}s_{r}%
\left\langle \xi,\nu\right\rangle ^{r}\label{02}%
\end{equation}
where $T_{r}$ , $\nu$, $s_{r}$, \ are as in relation $\;\not (\;$\ref{01})
where $P^{n}\subset\overline{M}^{n+1}$ is an embedded totally geodesic
submanifold instead of the hyperplane $\Pi$ and $\xi$ is a unitary normal
vector field to $P^{n}$. Relation (\ref{02}) shows that the ellipticity of the
Newton transformation $T_{r}$, for some $1\leq r\leq n-1$ on $M^{n}$, implies
the transversality of the hypersurfaces $M^{n}$ and $P^{n}$ along their
boundary. Formula (\ref{02}) was also obtained, in \cite{1}, by the two first
authors in context of pseudo-Riemannian spaces. Moreover we deduce from
relation (\ref{00}) that the ellipticity of the generalized Newton
transformation $T_{u}$ implies the transversality of the $q$ -codimension
submanifolds $M^{n}$ and $P^{n}$ in case where $P^{n}$ is totally geodesic
submanifold of $\overline{M}^{n+q}$.

\section{Preliminaries}

In this section, we will recall some properties of the generalized Newton
transformations and we will show how our method works.

\subsection{Generalized Newton Transformations}

Let $E$ be an $n$-dimensional real vector space and $End(E)$ be the vector
space of endomorphisms of $E$. Denote by $%
\mathbb{N}
$ the set of nonnegative integers and let $%
\mathbb{N}
^{q}$ be the one of multi- index $u=\left(  u_{1,}...,u_{q}\right)  $ with
$u_{j}\in%
\mathbb{N}
$. The length $\left\vert u\right\vert $ of $u$ is given by $\left\vert
u\right\vert =u_{1}+...+u_{q}$. $End^{q}\left(  E\right)  $ stands for the
vector space $End(E)\times...\times End(E)$ q-times. For $A=\left(
A_{1},\ldots,A_{q}\right)  \in End^{q}\left(  E\right)  $, $t=(t_{1}%
,...,t_{q})\in R^{q}$ and $u\in%
\mathbb{N}
^{q}$, we set%

\begin{align*}
tA  &  =t_{1}A_{1}+\ldots+t_{q}A_{q}\\
t^{u}  &  =t_{1}^{u_{1}}\ldots t_{q}^{u_{q}}\text{.}%
\end{align*}
For $\alpha\in\left\{  1,...,q\right\}  $ we define ( see \cite{4} ) the
musical functions $\alpha_{\flat}:%
\mathbb{N}
^{q}\longrightarrow%
\mathbb{N}
^{q}$ and $\alpha^{\sharp}:%
\mathbb{N}
^{q}\longrightarrow%
\mathbb{N}
^{q}$ by
\[
\alpha_{\flat}\left(  i_{1},\ldots,i_{q}\right)  =\left(  i_{1},\ldots
,i_{\alpha-1},i_{\alpha}-1,i_{\alpha+1},\ldots,i_{q}\right)
\]
and%
\[
\alpha^{\sharp}\left(  i_{1},\ldots,i_{q}\right)  =\left(  i_{1}%
,\ldots,i_{\alpha-1},i_{\alpha}+1,i_{\alpha+1},\ldots,i_{q}\right)
\]
It is clear that $\alpha_{\flat}$ is the inverse map of $\alpha^{\sharp}$.

The generalized Newton transformation (GNT in brief ) is a system of
endomorphisms $T_{u}=T_{u}\left(  A\right)  ,$ $u\in%
\mathbb{N}
^{q}$, that satisfies the following recursive relations
\begin{align*}
T_{0}  &  =I &  &  \text{where $0=(0,\ldots,0)$},\\
T_{u}  &  =\sigma_{u}I-\underset{\alpha}{\sum}A_{\alpha}T_{\alpha_{\flat
}\left(  u\right)  } & \text{where }\left\vert u\right\vert  &  >1\\
&  =\sigma_{u}I-\underset{\alpha}{\sum}T_{\alpha_{\flat}\left(  u\right)
}A_{\alpha} &  &
\end{align*}
where $\sigma_{u}$ are the coefficients of the Newton polynomial $P_{A}:%
\mathbb{R}
^{q}\longrightarrow%
\mathbb{R}
$ of $A$, given by%
\[
P_{A}(t)=\det\left(  I+tA\right)  =\underset{\left\vert u\right\vert \leq
n}{\sum\sigma_{u}t^{u}}%
\]
$\sigma_{u}=\sigma_{u}(A_{1},...,A_{q})$ depends only on $A=\left(
A_{1},...,A_{q}\right)  $ and $I$ is the identity map on $E$.

\subsection{The method}

We will describe how our method works.

\subsubsection{Hypersurfaces' case}

Let $M^{n}$ be a $n$-submanifold of codimension one in $\overline{M}^{n+1}$ of
boundary $\partial M$. Assume the boundary $\Sigma^{n-1}=\partial M$ \ is a
codimension one in $P^{n}\subset\overline{M}^{n+1}$. Then we have the
inclusions
\[
\Sigma^{n-1}\subset M^{n}\subset\overline{M}^{n+1}\text{, }\Sigma^{n-1}\subset
P^{n}\subset\overline{M}^{n+1}\text{.}%
\]
Denote the corresponding shape operators, respectively, by%
\[
A_{\Sigma}\text{, \ }A_{P}\text{, \ }A_{\Sigma,P}\text{, \ }A\text{.}%
\]
In our consideration we will need only $A_{\Sigma}$, $\ \ A_{P}$, $\ A$. More
precisely we will use
\[
A_{\Sigma}\text{, \ \ }A_{P}|_{\Sigma}\text{, \ }A\text{.}%
\]
First two are represented by square matrices of dimension $n-1$ whereas the
last one by a square matrix of dimension $n$. The intrinsic geometry of
$\ \Sigma^{n-1}$ in $M^{n}$ is coded in the pair ($A_{\Sigma},A_{P}|_{\Sigma}
$) and the geometry of $M^{n}\subset\overline{M}^{n+1}$is given by $A$.
Therefore we will use the following Newton Transformation and the generalized
Newton Transformations%
\[
T_{\left(  k,l\right)  }=T_{\left(  k,l\right)  }\left(  A_{\Sigma}%
,A_{P}|_{\Sigma}\right)  \text{ and }T_{r}=T_{r}\left(  A\right)
\]
and corresponding symmetric functions
\[
\sigma_{\left(  k,l\right)  }=\sigma_{\left(  k,l\right)  }\left(  A_{\Sigma
},A_{P}|_{\Sigma}\right)  \text{ and }\sigma_{r}=\sigma_{r}\left(  A\right)
\text{.}%
\]
The goal is to show that%
\begin{equation}
\left\langle T_{r}\nu,\nu\right\rangle =\sum_{k+l=r}\sigma_{\left(
k,l\right)  } \label{1''}%
\end{equation}
where $\nu$ is the unit normal vector to $\Sigma^{n-1}$ in $M^{n}.$

The only geometric considerations involved are the ones which lead to the
formulas
\[
\left\langle N,\nu\right\rangle =\left\langle \xi,N\right\rangle \text{,
}\left\langle \eta,N\right\rangle =-\left\langle \xi,\nu\right\rangle
\]
and
\[
\left\langle Ae_{i},e_{j}\right\rangle =-\left\langle A_{\Sigma}e_{i}%
,e_{j}\right\rangle \left\langle \xi,\nu\right\rangle +\left\langle A_{P}%
e_{i},e_{j}\right\rangle \left\langle \xi,N\right\rangle
\]
where $N$ is unit normal vector with the respect to inclusion $M^{n}%
\subset\overline{M}^{n+1}$, $\xi$ unit normal vector with respect to
$P^{n}\subset$ $\overline{M}^{n+1}$and $\eta$ is the unit normal vector of
$\Sigma^{n-1}\subset P^{n}$ and $\left(  e_{1},...,e_{n-1}\right)  $ is a
local orthonormal basis of $T\Sigma^{n-1}$, we may assume that this basis
consists of eigenvectors of $A_{\Sigma}$ i.e. $A_{\Sigma}e_{i}=\tau_{i}e_{i}$.
In other words%
\[
A|_{\Sigma}=-\left\langle \xi,\nu\right\rangle A_{\Sigma}+\left\langle
\xi,N\right\rangle A_{P}|_{\Sigma}\text{.}%
\]
Assuming $P$ is totally umbilical in $\overline{M}^{n+1}$ , we have
$A_{P}|_{\Sigma}=\lambda I_{T\Sigma^{n-1}}$. Hence
\begin{equation}
A|_{\Sigma}=-\left\langle \xi,\nu\right\rangle A_{\Sigma}+\lambda\left\langle
\xi,N\right\rangle I_{T\Sigma^{n-1}}\text{.} \label{2''}%
\end{equation}

Denote by $\widetilde{A}$ the matrix of $A$ with the respect of the basis
$\left(  e_{1},...,e_{n-1},\nu\right)  $ and by $A$ the matrix of $A|_{\Sigma}
$. Then%
\[
\widetilde{A}=\left(
\begin{array}
[c]{cc}%
A & B\\
B^{\intercal} & c
\end{array}
\right)  \text{ , where }B=\left(
\begin{array}
[c]{c}%
\left\langle A\nu,e_{1}\right\rangle \\
\vdots\\
\left\langle A\nu,e_{n-1}\right\rangle
\end{array}
\right)  \text{ and }c=\left\langle A\nu,\nu\right\rangle \text{.}%
\]
Let us compare symmetric functions of $\widehat{A}$ with symmetric functions
of $A$. We have%
\begin{align*}
P_{\widetilde{A}}(t)  &  =\det\left(
\begin{array}
[c]{cc}%
I_{n-1}+tA & tB\\
tB^{\intercal} & 1+tc
\end{array}
\right)  =(1+tc-t^{2}B^{\intercal}(I_{n-1}+tA)^{-1}B)\det(I_{n-1}+tA)\\
&  =f(t)P_{A}(t)\text{, }%
\end{align*}
where $f(t)=1+tc-t^{2}B^{\intercal}(I_{n-1}+tA)^{-1}B$ . Recall that%
\[
P_{\widetilde{A}}(t)=\sum_{j=0}^{n}(-1)^{j}\sigma_{j}(\widetilde{A}%
)t^{j}\text{.}%
\]
Hence
\begin{align*}
(-1)^{r}r!\sigma_{r}(\widetilde{A})  &  =\frac{d^{r}}{dt^{r}}P_{\widetilde{A}%
}(0)=%
{\displaystyle\sum\limits_{j=0}^{r}}
\left(
\begin{array}
[c]{c}%
r\\
j
\end{array}
\right)  \frac{d^{r-j}}{dt^{r-j}}P_{A}^{\left(  r-j\right)  }(0)f^{\left(
j\right)  }(0)\\
&  =%
{\displaystyle\sum\limits_{j=0}^{r}}
\left(
\begin{array}
[c]{c}%
r\\
j
\end{array}
\right)  \left(  -1\right)  ^{r-j}(r-j)!\sigma_{r-j}\left(  A\right)
f^{\left(  j\right)  }\left(  0\right)  .
\end{align*}
It is not hard to see that%
\[
f(0)=1\text{, }f^{\prime}(0)=-c\text{ \ and }f^{\left(  j\right)
}(0)=-j!B^{\intercal}A^{j-2}B\text{ for }j\geq2\text{.}%
\]
Therefore
\begin{equation}
\sigma_{r}(\widetilde{A})=\sigma_{r}\left(  A\right)  +c\sigma_{r-1}\left(
A\right)  -\sum_{j=2}^{r}(-1)^{j}\left(  B^{\intercal}A^{j-2}B\right)
\sigma_{r-j}\left(  A\right)  . \label{4'}%
\end{equation}
Let us now move to symmetric functions of two matrices. Notice first that
\[
\sigma_{r}(\widetilde{A}+\lambda I_{n})=%
{\displaystyle\sum\limits_{j=0}^{r}}
\left(
\begin{array}
[c]{c}%
n-j\\
r-j
\end{array}
\right)  \lambda^{r-j}\sigma_{j}\left(  \widetilde{A}\right)  \text{.}%
\]
Indeed, $P_{\widetilde{A}+\lambda I_{n}}(t)=(1+t(a_{1}+\lambda)...(1+t(a_{n}%
+\lambda)$ if $a_{1},...,a_{n}$ are the eigenvalues of $\widetilde{A}$. Notice
moreover that%
\begin{align*}
P_{\widetilde{A},\lambda I_{n}}(t)  &  =\det\left(  I_{n}+t\widetilde
{A}+s\lambda I_{n}\right) \\
&  =\left(  1+ta_{1}+s\lambda\right)  ...\left(  1+ta_{n}+s\lambda\right)
\text{.}%
\end{align*}
Thus
\begin{equation}
\sigma_{(k,l)}(\widetilde{A},\lambda I_{n})=\left(
\begin{array}
[c]{c}%
n-k\\
l
\end{array}
\right)  \lambda^{l}\sigma_{k}(\widetilde{A})\text{.} \label{5'}%
\end{equation}
Hence%
\begin{equation}
\sigma_{r}(\widetilde{A}+\lambda I_{n})=\sum_{j=0}^{r}\sigma_{(j,r-j)}%
(\widetilde{A},\lambda I_{n}) \label{6'}%
\end{equation}
We need to show, by (\ref{6'}) and (\ref{1''}), that%
\begin{equation}
\left\langle T_{r}\nu,\nu\right\rangle =\widetilde{\sigma}_{r}\left(
A\right)  \text{.} \label{3'}%
\end{equation}
By the recurrence formula for $T_{r}$ we have%
\begin{align*}
\left\langle T_{r}\nu,\nu\right\rangle  &  =\sigma_{r}\left(  \widetilde
{A}\right)  -\left\langle T_{r-1}\nu,\widetilde{A}\nu\right\rangle \\
&  =\sigma_{r}\left(  \widetilde{A}\right)  -c\left\langle T_{r-1}\nu
,\nu\right\rangle -\sum_{i=1}^{n-1}\left\langle T_{r-1}\nu,e_{i}\right\rangle
\left\langle \widetilde{A}\nu,e_{i}\right\rangle .
\end{align*}
We will show that%

\begin{equation}
\sum_{i=1}^{n-1}\left\langle T_{k}\nu,e_{i}\right\rangle \left\langle
\widetilde{A}\nu,e_{i}\right\rangle =-\sum_{j=2}^{k}\left(  -1\right)
^{j}\left(  B^{\intercal}A^{j-2}B\right)  \sigma_{k-j}(A). \label{3''}%
\end{equation}
Indeed, assuming $(e_{i})$ is a basis consisting of eigenvectors with
eigenvalues $\left(  \tau_{i}\right)  _{i}$ by induction we have
\begin{align*}
\sum_{i=1}^{n-1}\left\langle T_{k}\nu,e_{i}\right\rangle \left\langle
\widetilde{A}\nu,e_{i}\right\rangle  &  =-b_{i}^{2}\sum_{j=2}^{k}\left(
-j\right)  \sigma_{k-j}\left(  A\right)  \tau_{i}^{j-2}\\
&  =\sum_{j=2}^{k}(-1)^{j}\left(  b_{i}\tau_{i}^{j-2}b_{i}\right)
\sigma_{k-j}\left(  A\right)
\end{align*}
where $B^{\intercal}=(b_{i})_{i}$, which proves (\ref{3''}). Thus by induction
and (\ref{4'}), we get%
\[
\left\langle T_{r}\nu,\nu\right\rangle =\sigma_{r}(\widetilde{A}%
)-c\sigma_{r-1}(A)+\sum_{j=2}^{r}\left(  -1\right)  ^{j}\left(  B^{\intercal
}A^{j-2}B\right)  \sigma_{r-j}(A)=\widetilde{\sigma}_{r}(A)\text{.}%
\]

\subsubsection{General case}

We generalize considerations to any codimension $q$. We generalize equations
(\ref{1''}) and (\ref{3'}). Let $P^{n}$ and $M^{n}$ be of codimension $q$ in
$\overline{M}^{n+q}$. We assume that $P^{n}$ is totally umbilical in
$\overline{M}^{n+1}$. As before $\Sigma^{n-1}\subset P^{n}$ is a boundary of
$M^{n}$. Then we have the following shape operators
\[
A_{\Sigma}\text{, \ }A_{P}^{\xi_{1}},...,A_{P}^{\xi_{p}}\text{, \ }A^{N_{1}%
},...,A^{N_{q}},
\]
corresponding to inclusions $\Sigma^{n-1}\subset P^{n}$, $P^{n}\subset
\overline{M}^{n+q}$, and $M^{n}\subset\overline{M}^{n+q}$, where $\left(
\xi_{1},...,\xi_{q}\right)  $ are orthogonal to $P^{n}$ and $\left(
N_{1},...,N_{q}\right)  $ are orthogonal to $M^{n}$. Let
\[
T_{u}=T_{u}\left(  A^{N_{1}},...,A^{N_{q}}\right)  \text{,\ \ \ }T_{v}%
=T_{v}\left(  A_{\Sigma},\text{ }A^{\xi_{1}}|_{\Sigma},...,A^{\xi_{q}%
}|_{\Sigma}\right)
\]
and%
\[
\widetilde{T_{u}}=\widetilde{T}_{u}\left(  A^{N_{1}}|_{\Sigma},...,A^{N_{q}%
}|_{\Sigma}\right)
\]
where $u$ is of length $q$ and $v$ of length $q+1$. We hope that the following
relations hold%

\begin{equation}
\left\langle T_{u}\nu;\nu\right\rangle =\widetilde{\sigma}_{u} \label{3'''}%
\end{equation}
and%
\[
\widetilde{\sigma}_{u}=%
{\displaystyle\sum\limits_{\left\vert v\right\vert =\left\vert u\right\vert
\text{, }v\text{ not increasing }}}
c_{v}\sigma_{v}%
\]
where $c_{v}$ is a coefficient independent of $\left(  A^{N_{1}}|_{\Sigma
},...,A^{N_{q}}|_{\Sigma}\right)  $. Both of them imply%
\[
\left\langle T_{u}\nu;\nu\right\rangle =%
{\displaystyle\sum\limits_{\left\vert v\right\vert =\left\vert u\right\vert
\text{, }v\text{ not increasing }}}
c_{v}\sigma_{v}.
\]

The first equality is a generalization of (\ref{3'}), whereas the second one
of (\ref{6'}). Moreover, the first one is purely algebraic and the second one
uses correspondence between $A_{\Sigma},$ $\ A^{\xi_{1}}|_{\Sigma}%
,...,A^{\xi_{q}}|_{\Sigma}$ and $A^{N_{1}}|_{\Sigma},...,A^{N_{q}}|_{\Sigma}$
analogous to the relation (\ref{2''}) in the codimension one case.

Let us be more precise. First notice that

\bigskip%
\begin{align*}
\overline{\nabla}_{e_{i}}e_{j}  &  =\sum_{k=1}^{n-1}\langle\overline{\nabla
}_{e_{i}}e_{j},e_{k}\rangle e_{k}+\langle\overline{\nabla}_{e_{i}}e_{j}%
,\nu\rangle\nu+\sum_{\alpha=1}^{q}\langle\overline{\nabla}_{e_{i}}%
e_{j},N_{\alpha}\rangle N_{\alpha}\\
&  =\sum_{k=1}^{n-1}\langle\overline{\nabla}_{e_{i}}e_{j},e_{k}\rangle
e_{k}+\langle\overline{\nabla}_{e_{i}}e_{j},\nu\rangle\nu+\sum_{\alpha=1}%
^{q}\langle A^{N_{\alpha}}e_{i},e_{j}\rangle N_{\alpha}%
\end{align*}
and
\begin{align*}
\overline{\nabla}_{e_{i}}e_{j}  &  =\sum_{k=1}^{n-1}\langle\overline{\nabla
}_{e_{i}}e_{j},e_{k}\rangle e_{k}+\langle\overline{\nabla}_{e_{i}}e_{j}%
,\eta\rangle\eta+\sum_{\alpha=1}^{q}\langle\overline{\nabla}_{e_{i}}e_{j}%
,\xi_{\alpha}\rangle\xi_{\alpha}\\
&  =\sum_{k=1}^{n-1}\langle\overline{\nabla}_{e_{i}}e_{j},e_{k}\rangle
e_{k}+\langle A_{\Sigma}e_{i},e_{j}\rangle\eta+\sum_{\alpha=1}^{q}\langle
A^{\xi_{\alpha}}e_{i},e_{j}\rangle\xi_{\alpha}%
\end{align*}
Thus
\[
\langle\overline{\nabla}_{e_{i}}e_{j},\nu\rangle\nu+\sum_{\alpha=1}^{q}\langle
A^{N_{\alpha}}e_{i},e_{j}\rangle N_{\alpha}=\ \langle A_{\Sigma}e_{i}%
,e_{j}\rangle\eta+\sum_{\alpha=1}^{q}\langle A^{\xi_{\alpha}}e_{i}%
,e_{j}\rangle\xi_{\alpha}%
\]
Hence%
\begin{align*}
\left\langle A^{N_{\alpha}}e_{i},e_{j}\right\rangle  &  =\left\langle
\eta,N_{\alpha}\right\rangle \left\langle A_{\Sigma}(e_{i}),e_{j}\right\rangle
+\sum_{\beta=1}^{q}\left\langle \xi_{\beta},N_{\alpha}\right\rangle
\langle\left(  A^{\xi_{\beta}}\right)  e_{i},e_{j}\rangle\\
&  =\left\langle \eta,N_{\alpha}\right\rangle \left\langle A_{\Sigma}%
(e_{i}),e_{j}\right\rangle +\sum_{\beta=1}^{q}\left\langle \xi_{\beta
},N_{\alpha}\right\rangle \langle A^{\xi_{\beta}}e_{i},e_{j}\rangle
\end{align*}

Assuming that $P^{n}$ is totally umbilical, i.e. $A^{\xi_{\alpha}}%
=\lambda_{\alpha}I_{n}$, where $I_{n}$ denotes the identity map of the tangent
space $T_{p}P^{n}$, we get%
\[
A^{N_{\alpha}}|_{\Sigma}=\left\langle \eta,N_{\alpha}\right\rangle A_{\Sigma
}+\sum_{\beta=1}^{q}\left\langle \xi_{\beta},N_{\alpha}\right\rangle
\lambda_{\beta}I_{n-1}\text{.}%
\]
Notice that it can be written in the form%
\begin{equation}
A^{N_{\alpha}}|_{\Sigma}=\left\langle \eta,N_{\alpha}\right\rangle A_{\Sigma
}+\left\langle V,N_{\alpha}\right\rangle I_{n-1}\text{, \ \ where }%
V=\sum_{\beta=1}^{q}\lambda_{\beta}\xi_{\beta}. \label{7'}%
\end{equation}
Moreover one can show that%
\[
\left\langle \eta,V\right\rangle =\det\left(  \left\langle \xi_{\alpha
},N_{\beta}\right\rangle \right)  _{\alpha,\beta}\text{ \ and }\left\langle
\eta,N_{\alpha}\right\rangle =-\det C_{\alpha}%
\]
where $C_{\alpha}$ is a matrix obtained from $C=\left(  \left\langle
\xi_{\alpha},N_{\beta}\right\rangle \right)  _{\alpha,\beta}$ by replacing the
$\alpha$-th column by $\left\langle \xi_{\alpha},\nu\right\rangle $ .

Hence we have%

\[
\widetilde{T}_{u}=\widetilde{T}_{u}\left(  \rho_{1}A_{\Sigma}+\mu_{1}%
I_{n-1},...,\rho_{q}A_{\Sigma}+\mu_{q}I_{n-1}\right)  \text{, where \ \ }%
\rho_{\alpha}=\left\langle \eta,N_{\alpha}\right\rangle \text{, }\mu_{\alpha
}=\left\langle V,N_{\alpha}\right\rangle \text{.}%
\]
Now if $\left\{  e_{1},\ldots,e_{n-1}\right\}  $ is a basis of eigenvectors of
the shape operator $A_{\Sigma}$ then, for every $i\in\left\{  1,\ldots
,n-1\right\}  $,
\[
A_{\Sigma}(e_{i})=\tau_{i}e_{i}.
\]
where $\tau_{i}$ are the corresponding eigenvalues.

By relation (\ref{7'}) we get
\begin{align*}
\langle A_{\alpha}(e_{i}),e_{j}\rangle &  =\left(  \left\langle \eta
,N_{\alpha}\right\rangle \tau_{i}+\left\langle V,N_{\alpha}\right\rangle
\right)  \delta_{i}^{j}\\
&  =\gamma_{i,\alpha}\delta_{i}^{j},
\end{align*}
where
\[
\gamma_{i,\alpha}=\left\langle \eta,N_{\alpha}\right\rangle \tau
_{i}+\left\langle V,N_{\alpha}\right\rangle .
\]

Thus the matrix associated to the shape operator $A_{\alpha}$ with respect to
the basis $\left\{  e_{1},\ldots,e_{n-1},\nu\right\}  $ is given by
\[
A_{\alpha}=\left(
\begin{array}
[c]{ccccc}%
\gamma_{1,\alpha} & 0 & \hdots & 0 & \langle A_{\alpha}\nu,e_{1}\rangle\\
0 & \gamma_{2,\alpha} & \hdots & 0 & \vdots\\
\vdots & \vdots & \ddots & \vdots & \vdots\\
0 & \hdots & \hdots & \gamma_{n-1,\alpha} & \langle A_{\alpha}\nu
,e_{n-1}\rangle\\
\langle A_{\alpha}\nu,e_{1}\rangle & \hdots & \hdots & \langle A_{\alpha}%
\nu,e_{n-1}\rangle & \langle A_{\alpha}\nu,\nu\rangle
\end{array}
\right)  \text{.}%
\]
.

\section{Algebraic Formulas}

Let us prove the algebraic relation (\ref{3'''}), to do so, we put
\[
\rho_{\alpha}=\langle\eta,N_{\alpha}\rangle,\quad\mu_{\alpha}=\langle
V,N_{\alpha}\rangle.
\]
Moreover, we will write $v\leq u$ for multi--indices $v,u\in\mathbb{N}^{q}$ if
the difference $u-v\in\mathbb{N}^{q}$. For a multi--index $u=\left(
u_{1},...,u_{q}\right)  \in\mathbb{N}^{q}$ of length $\left\vert u\right\vert
=n$, we set
\[
\binom{n}{u}=\frac{n!}{u!}=\frac{n!}{u_{1}!\ldots u_{q}!}.
\]

\begin{proposition}
\label{Prop1} \bigskip Let $\tilde{A}=(A_{1}|_{\Sigma},...,A_{q}|_{\Sigma
})=(\rho_{1}A_{\Sigma}+\mu_{1}I,\ldots,\rho_{q}A_{\Sigma}+\mu_{q}I)$. Put
$\tilde{\sigma}_{u}=\sigma_{u}(\tilde{A})$. Then for every multi-index $u\in%
\mathbb{N}
^{q}$ we have
\begin{equation}
\tilde{\sigma}_{u}=\frac{1}{\left(  n-1-\left\vert u\right\vert \right)  !}%
{\displaystyle\sum\limits_{l\leq u}}
\left(
\begin{array}
[c]{c}%
\left\vert l\right\vert \\
l
\end{array}
\right)  \rho^{l}\mu^{u-l}\left(
\begin{array}
[c]{c}%
n-1-\left\vert l\right\vert \\
u-l
\end{array}
\right)  \sigma_{\left\vert l\right\vert }\left(  A_{\Sigma}\right)  .
\label{1'}%
\end{equation}

\end{proposition}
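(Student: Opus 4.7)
The plan is to compute $P_{\tilde A}(t)=\det(I+t\tilde A)$ in closed form and then read off the coefficient of $t^u$. The crucial observation is that each $\tilde A_\alpha=\rho_\alpha A_\Sigma+\mu_\alpha I$ is an affine combination of a single operator $A_\Sigma$ and the identity, so the whole sum inherits the same shape:
\[
I+\sum_{\alpha=1}^{q} t_\alpha\tilde A_\alpha \;=\; \lambda(t)\,I+s(t)\,A_\Sigma,
\]
where $\lambda(t)=1+\sum_\alpha t_\alpha\mu_\alpha$ and $s(t)=\sum_\alpha t_\alpha\rho_\alpha$. Consequently $P_{\tilde A}(t)=\det\bigl(\lambda(t)I+s(t)A_\Sigma\bigr)$, and the problem reduces to understanding the characteristic polynomial of the single $(n-1)\times(n-1)$ operator $A_\Sigma$.

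Pulling out $\lambda(t)^{n-1}$ from the determinant gives the polynomial identity
\[
P_{\tilde A}(t)=\lambda(t)^{n-1}P_{A_\Sigma}\!\bigl(s(t)/\lambda(t)\bigr)=\sum_{k=0}^{n-1}\sigma_k(A_\Sigma)\,s(t)^k\,\lambda(t)^{n-1-k}.
\]
Next, I expand both factors by the multinomial theorem. For the first, $s(t)^k=\sum_{|l|=k}\binom{|l|}{l}\rho^l t^l$; for the second, combining the ordinary binomial expansion of $(1+x)^{n-1-k}$ with the multinomial expansion of $x=\sum_\alpha t_\alpha\mu_\alpha$ yields
\[
\lambda(t)^{n-1-k}=\sum_{|j|\le n-1-k}\frac{(n-1-k)!}{(n-1-k-|j|)!\,j!}\,\mu^j\,t^j.
\]

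To finish, I collect the coefficient of a fixed monomial $t^u$: the contributions come from pairs $(l,j)$ with $l+j=u$ and $k=|l|$, so that $n-1-k-|j|=n-1-|u|$. The resulting coefficient is
\[
\sum_{l\le u}\binom{|l|}{l}\,\frac{(n-1-|l|)!}{(u-l)!\,(n-1-|u|)!}\,\rho^l\mu^{u-l}\,\sigma_{|l|}(A_\Sigma),
\]
and factoring $1/(n-1-|u|)!$ out of the sum reproduces exactly the right-hand side of~(\ref{1'}). The argument is purely algebraic; the only step requiring care is the bookkeeping of the multinomial coefficients in the expansion of $\lambda(t)^{n-1-k}$, which is routine rather than a real obstacle. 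The one conceptual move, and the reason the formula has this compact shape at all, is the reduction $P_{\tilde A}(t)=\det(\lambda I+sA_\Sigma)$ that follows from the umbilicity assumption on $P^n$.
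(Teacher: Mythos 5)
Your proof is correct and follows essentially the same route as the paper: expand $\det\bigl(I+\sum_\alpha t_\alpha(\rho_\alpha A_\Sigma+\mu_\alpha I)\bigr)$ as a product over the eigenvalues of $A_\Sigma$, i.e.\ as $\sum_k\sigma_k(A_\Sigma)\,s(t)^k\lambda(t)^{n-1-k}$, then apply the multinomial theorem and read off the coefficient of $t^u$. The only (welcome) difference is that by keeping $\lambda(t)$ and $s(t)$ instead of normalizing with $\theta_\alpha=\mu_\alpha/\rho_\alpha$ as the paper does, you never divide by $\rho_\alpha$, so your argument covers the case $\rho_\alpha=0$ uniformly, which the paper has to treat in a separate remark after the proposition.
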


\begin{proof}
First, by applying the following relation (see \cite{6})
\[
\sigma_{u}(aA_{1},\ldots,A_{q})=a^{u_{1}}\sigma_{u}(A_{1},\ldots,A_{q})
\]
we obtain%
\begin{equation}
\tilde{\sigma}_{u}(\rho_{1}A_{\Sigma}+\mu_{1}I,\ldots,\rho_{q}A_{\Sigma}%
+\mu_{q}I)=\rho^{u}\hat{\sigma}_{u}(A_{\Sigma}+\theta_{1}I,\ldots,A_{\Sigma
}+\theta_{q}I) \label{2'}%
\end{equation}
where%
\[
\theta_{\alpha}=\frac{\mu_{\alpha}}{\rho_{\alpha}}\text{ and \ }\rho^{u}%
=\rho_{1}^{u_{1}}...\rho_{q}^{u_{q}}.
\]
Consider the GNT $T_{u}=T_{u}(\hat{A})$, with%
\[
\hat{A}=(A_{\Sigma}+\theta_{1}I,\ldots,A_{\Sigma}+\theta_{q}I).
\]
The characteristic polynomial associate to $\hat{A}$ is then%
\[
P_{\hat{A}}(t)=\underset{\left\vert u\right\vert \leq n-1}{\sum}\hat{\sigma
}_{u}t^{u}%
\]
Moreover, we have
\begin{align}
P_{\hat{A}}(t)  &  =\det\left(  I+\sum_{\alpha=1}^{q}t_{\alpha}\left(
A_{\Sigma}+\theta_{\alpha}\right)  I\right) \\
&  =%
{\displaystyle\prod\limits_{j=1}^{n-1}}
\left(  1+\tau_{j}\left(  t_{1}+..+t_{q}\right)  +t_{1}\theta_{1}+\ldots
+t_{q}\theta_{q}\right)  .
\end{align}
By expanding the linear factorization of a monic polynomial we obtain
\[
P_{\hat{A}}(t)=\sum_{j=0}^{n-1}\left(  1+t_{1}\theta_{1}+...+t_{q}\theta
_{q}\right)  ^{n-1-j}\left(  t_{1}+...+t_{q}\right)  ^{j}\sigma_{j}\left(
A_{\Sigma}\right)
\]
and by the multinomial theorem we get%
\[
P_{\hat{A}}(t)=\sum_{j=0}^{n-1}\sum_{k=0}^{n-1-j}\sum_{\left\vert v\right\vert
=k}\sum_{\left\vert l\right\vert =j}\frac{1}{\left(  n-1-j-k\right)  !}\left(
\begin{array}
[c]{c}%
j\\
l
\end{array}
\right)  \left(
\begin{array}
[c]{c}%
n-1-j\\
v
\end{array}
\right)  \sigma_{j}\left(  A_{\Sigma}\right)  \theta^{v}t^{v+l}.
\]
where $l=(l_{1},...,l_{q})$, $v=\left(  v_{1},...,v_{q}\right)  $,
$\theta=\left(  \theta_{1},...,\theta_{q}\right)  $ and $t=\left(
t_{1},...,t_{q}\right)  $. Let $u=v+l$ so $j=\left\vert u\right\vert
-\left\vert v\right\vert $ and%
\[
P_{\hat{A}}(t)=\sum_{\left\vert u\right\vert \leq n-1}\sum_{l\leq u}\frac
{1}{\left(  n-1-\left\vert u\right\vert \right)  !}\left(
\begin{array}
[c]{c}%
\left\vert l\right\vert \\
l
\end{array}
\right)  \left(
\begin{array}
[c]{c}%
n-1-\left\vert l\right\vert \\
u-l
\end{array}
\right)  \theta^{u-l}\sigma_{\left\vert l\right\vert }\left(  A_{\Sigma
}\right)  t^{u}\text{.}%
\]
Thus the coefficient before $t^{u}$ is given by%
\[
\hat{\sigma}_{u}(A_{\Sigma}+\theta_{1}I,\ldots,A_{\Sigma}+\theta_{q}%
I)=\frac{1}{\left(  n-1-\left\vert u\right\vert \right)  !}%
{\displaystyle\sum\limits_{l\leq u}}
\left(
\begin{array}
[c]{c}%
\left\vert l\right\vert \\
l
\end{array}
\right)  \theta^{u-l}\left(
\begin{array}
[c]{c}%
n-1-\left\vert l\right\vert \\
u-l
\end{array}
\right)  \sigma_{\left\vert l\right\vert }\left(  A_{\Sigma}\right)  \text{.}%
\]
Consequently%
\[
\tilde{\sigma}_{u}(\rho_{1}A_{\Sigma}+\mu_{1}I,\ldots,\rho_{q}A_{\Sigma}%
+\mu_{q}I)=
\]

\begin{equation}
\frac{1}{\left(  n-1-\left\vert u\right\vert \right)  !}%
{\displaystyle\sum\limits_{l\leq u}}
\left(
\begin{array}
[c]{c}%
\left\vert l\right\vert \\
l
\end{array}
\right)  \rho^{l}\mu^{u-l}\left(
\begin{array}
[c]{c}%
n-1-\left\vert l\right\vert \\
u-l
\end{array}
\right)  \sigma_{\left\vert l\right\vert }\left(  A_{\Sigma}\right)
\label{2'''}%
\end{equation}

On the other hand

Denote by $\bar{A}=(A_{\Sigma},\mu_{1}I,\ldots,\mu_{q}I)$ and $\bar{\sigma
}_{\left(  k,v\right)  }=\sigma_{\left(  k,v\right)  }(\bar{A})$. We have
\[
\sigma_{(k,v)}(\bar{A})=\mu^{v}\sigma_{\left(  k,v\right)  }(A_{\Sigma
},I,\ldots,I).
\]
Now, we use the following formula ( see \cite{6} )
\[
\sigma_{(}v,k)(B_{1},\ldots,B_{q},I)=\binom{n-1-|v|}{k}\sigma_{v}(B_{1}%
,\ldots,B_{q})
\]
consecutively $q$-times and we get
\begin{align*}
\sigma_{(j,v)}(A_{\Sigma},I,\ldots,I)  &  =\binom{n-1-|v|+v_{1}}{v_{1}}%
\ldots\binom{n-1-|v|+|v|}{v_{q}}\sigma_{j}(A_{\Sigma})\\
&  =\frac{1}{\left(  n-1-|v|-j\right)  !}\binom{n-1-j}{v}\sigma_{j}(A_{\Sigma
}).
\end{align*}
Taking $j=|u|-|v|$, we obtain%
\[
=\frac{1}{\left(  n-1-|u|\right)  !}\binom{n-1-|u|+|v|}{v}\sigma
_{|u|-|v|}(A_{\Sigma})
\]
and if we let $l=u-v,$we have%
\begin{equation}
\sigma_{(|l|,u-l)}(A_{\Sigma},I,\ldots,I)=\frac{1}{\left(  n-1-|u|\right)
!}\binom{n-1-|l|}{u-l}\sigma_{|l|}(A_{\Sigma}) \label{3}%
\end{equation}
So by relations (\ref{2'}) and (\ref{3}), we get%
\begin{equation}
\tilde{\sigma}_{u}=\frac{1}{\left(  n-1-\left\vert u\right\vert \right)  !}%
{\displaystyle\sum\limits_{l\leq u}}
\left(
\begin{array}
[c]{c}%
\left\vert l\right\vert \\
l
\end{array}
\right)  \rho^{l}\mu^{u-l}\left(
\begin{array}
[c]{c}%
n-1-\left\vert l\right\vert \\
u-l
\end{array}
\right)  \sigma_{\left\vert l\right\vert }\left(  A_{\Sigma}\right)  \label{4}%
\end{equation}

\end{proof}

Notice, that in the proof of the above Proposition, we assumed $\rho_{\alpha
}\neq0$ for all $\alpha$, this allowed us to defined the constants
$\theta_{\alpha}$. The assumption $\rho_{\alpha}\neq0$ is not necessary. Since
if there exists $\alpha\in\left\{  1,..,q\right\}  $ such that $\rho_{\alpha
}=0$. then (see \cite{6})
\begin{align*}
\tilde{\sigma}_{u}  &  =\tilde{\sigma}_{u}(\rho_{1}A_{\Sigma}+\mu_{1}%
I,\ldots,\rho_{q}A_{\Sigma}+\mu_{q}I)\\
&  =\tilde{\sigma}_{u}(\rho_{1}A_{\Sigma}+\mu_{1}I,\ldots,\rho_{i-1}A_{\Sigma
}+\mu_{i-i}I,\mu_{i}I,\rho_{i+1}A_{\Sigma}+\mu_{i+1}I,\ldots,\rho_{q}%
A_{\Sigma}+\mu_{q}I)\\
&  =\mu_{i}^{u_{i}}\tilde{\sigma}_{\tilde{u}}(\rho_{1}A_{\Sigma}+\mu
_{1}I,\ldots,\rho_{i-1}A_{\Sigma}+\mu_{i-i}I,\rho_{i+1}A_{\Sigma}+\mu
_{i+1}I,\ldots,\rho_{q}A_{\Sigma}+\mu_{q}I),
\end{align*}
where%
\[
\tilde{u}=\left(  u_{1},\ldots,u_{i-1},u_{i+1},\ldots,u_{q}\right)
\]
and we may apply the above Proposition to $\tilde{\sigma}_{\tilde{u}}$.

\begin{proposition}
\label{Prop2} Let $\tilde{A}=(A_{1}|_{\Sigma},...,A_{q}|_{\Sigma})$ ,where
$A_{\alpha}|_{\Sigma}=(\rho_{\alpha}A_{\Sigma}+\mu_{\alpha}I$, and
$\overline{A}=(A_{\Sigma},\mu_{1}I,...,\mu_{q}I).$ For any multi-index $u\in%
\mathbb{N}
^{q}$, put $\tilde{\sigma}_{u}=\sigma_{u}(\tilde{A})$ and $\overline{\sigma
}_{u}=\sigma_{u}\left(  \overline{A}\right)  $. Then we have%
\begin{equation}
\tilde{\sigma}_{u}=%
{\displaystyle\sum\limits_{l\leq u}}
\left(
\begin{array}
[c]{c}%
\left\vert l\right\vert \\
l
\end{array}
\right)  \rho^{l}\overline{\sigma}_{\left(  |l|,u-l\right)  }. \label{7}%
\end{equation}

\end{proposition}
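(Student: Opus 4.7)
The plan is to derive Proposition \ref{Prop2} as an immediate consequence of Proposition \ref{Prop1}, by repackaging the factor $\mu^{u-l}\binom{n-1-|l|}{u-l}\sigma_{|l|}(A_\Sigma)/(n-1-|u|)!$ that appears there in terms of symmetric functions of $\overline{A}=(A_\Sigma,\mu_1 I,\ldots,\mu_q I)$. No new geometric input is needed; only the two identities for mixed symmetric functions that were already used in the proof of Proposition \ref{Prop1}.

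First I would recall the homogeneity property
\[
\sigma_{(k,v)}(A_\Sigma,\mu_1 I,\ldots,\mu_q I)=\mu^{v}\,\sigma_{(k,v)}(A_\Sigma,I,\ldots,I),
\]
which, by definition of $\overline{\sigma}_{(k,v)}=\sigma_{(k,v)}(\overline{A})$, reads
\[
\overline{\sigma}_{(|l|,u-l)}=\mu^{u-l}\,\sigma_{(|l|,u-l)}(A_\Sigma,I,\ldots,I).
\]
Next I would substitute formula (\ref{3}) from the proof of Proposition \ref{Prop1}, namely
\[
\sigma_{(|l|,u-l)}(A_\Sigma,I,\ldots,I)=\frac{1}{(n-1-|u|)!}\binom{n-1-|l|}{u-l}\sigma_{|l|}(A_\Sigma),
\]
to obtain
\[
\overline{\sigma}_{(|l|,u-l)}=\frac{\mu^{u-l}}{(n-1-|u|)!}\binom{n-1-|l|}{u-l}\sigma_{|l|}(A_\Sigma).
\]

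Finally I would invoke Proposition \ref{Prop1}, which gives
\[
\tilde{\sigma}_u=\frac{1}{(n-1-|u|)!}\sum_{l\leq u}\binom{|l|}{l}\rho^{l}\mu^{u-l}\binom{n-1-|l|}{u-l}\sigma_{|l|}(A_\Sigma),
\]
and recognize that each summand equals $\binom{|l|}{l}\rho^{l}\,\overline{\sigma}_{(|l|,u-l)}$. Summing over $l\leq u$ yields the desired identity (\ref{7}). The only subtlety is the case $\rho_\alpha=0$ for some $\alpha$, but this is handled exactly as in the remark following Proposition \ref{Prop1}: one reduces to a multi-index $\tilde{u}$ of lower dimension and applies the same identification componentwise. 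Thus the proof is essentially a rewriting step, and the main (very mild) obstacle is just book-keeping of the multinomial coefficient $\binom{|l|}{l}$, which is untouched by the substitution.
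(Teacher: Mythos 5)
Your proposal is correct and follows essentially the same route as the paper: express $\overline{\sigma}_{(|l|,u-l)}$ via the homogeneity property and the identity $\sigma_{(|l|,u-l)}(A_{\Sigma},I,\ldots,I)=\frac{1}{(n-1-|u|)!}\binom{n-1-|l|}{u-l}\sigma_{|l|}(A_{\Sigma})$, then substitute into the formula of Proposition \ref{Prop1} to obtain (\ref{7}). The paper's own proof does exactly this rewriting (re-deriving the identity from the formula in \cite{6} and concluding ``by (\ref{1'}) we get (\ref{7})''), so no further comment is needed.
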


\begin{proof}
First notice that
\[
\overline{\sigma}_{\left(  j,v\right)  }=\sigma_{\left(  j,v\right)  }(\bar
{A})=\mu^{v}\sigma_{\left(  j,v\right)  }(A_{\Sigma},I,\ldots,I).
\]
Using the following formula (see \cite{6})
\[
\sigma_{(}v,j)(B_{1},\ldots,B_{q},I)=\binom{n-1-|v|}{j}\sigma_{v}(B_{1}%
,\ldots,B_{q})
\]
consecutively $q$ -times, we obtain
\begin{align*}
\sigma_{(j,v)}(A_{\Sigma},I,\ldots,I)  &  =\binom{n-1-|v|+v_{1}}{v_{1}}%
\ldots\binom{n-1-|v|+|v|}{v_{q}}\sigma_{j}(A_{\Sigma})\\
&  =\frac{1}{\left(  n-1-|v|-j\right)  !}\binom{n-1-j}{v}\sigma_{j}(A_{\Sigma
})
\end{align*}
or by putting $l=u-v$%
\[
\sigma_{\left(  |l|,u-l\right)  }=\frac{1}{\left(  n-1-\left\vert u\right\vert
\right)  !}\left(
\begin{array}
[c]{c}%
n-1-\left\vert l\right\vert \\
u-l
\end{array}
\right)  \sigma_{\left\vert l\right\vert }(A_{\Sigma})
\]
so%
\[
\overline{\sigma}_{\left(  |l|,u-l\right)  }=\frac{\mu^{u-l}}{\left(
n-1-|u|\right)  !}\left(
\begin{array}
[c]{c}%
n-1-\left\vert l\right\vert \\
u-l
\end{array}
\right)  \sigma_{\left\vert l\right\vert }(A_{\Sigma})
\]
Thus by (\ref{1'}) we get (\ref{7}).
\end{proof}

Let us now state the relation between symmetric functions corresponding to the
families $(A_{\alpha})$ and $(A_{\alpha}|\Sigma)$. Notice that there are of
different sizes.

\begin{proposition}
\label{Prop3} Let $A=(A_{1},...,A_{q})$ and $\tilde{A}=(A_{1}|_{\Sigma
},...,A_{q}|_{\Sigma})$. Put $\sigma_{u}=\sigma_{u}(A)$ and $\tilde{\sigma
}_{u}=\sigma_{u}(\tilde{A})$. Then
\begin{multline*}
\sigma_{u}=\tilde{\sigma}_{u}+\underset{\alpha}{\sum}C_{\alpha}\tilde{\sigma
}_{\alpha_{\flat}(v)}\\
+\underset{\alpha^{\sharp}\beta^{\sharp}(0)\leq w\leq u}{\sum}\underset
{\alpha,\beta}{\sum}(-1)^{\left\vert w\right\vert -\left\vert v\right\vert
+1}\binom{\left\vert u\right\vert -\left\vert w\right\vert }{u-w}B_{\alpha
}^{\top}\tilde{A}^{u-w}B_{\beta}\tilde{\sigma}_{\alpha_{\flat}\beta_{\flat
}(w)}%
\end{multline*}
where $\ B_{\alpha}^{\top}=\left(  \langle A_{\alpha}v,e_{1}\rangle
,...,\langle A_{\alpha}v,e_{n-1}\rangle\right)  $ and $C_{\alpha}=\langle
A_{\alpha}v,v\rangle$.
\end{proposition}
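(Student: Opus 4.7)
The plan is to exploit the block decomposition of each $A_\alpha$ with respect to the adapted basis $(e_1,\ldots,e_{n-1},\nu)$ introduced in Section 2.2, where
\[
A_\alpha=\begin{pmatrix} A_\alpha|_\Sigma & B_\alpha \\ B_\alpha^\top & C_\alpha \end{pmatrix},
\]
and then to compare the Newton polynomials of $A$ and $\tilde A$ via a Schur complement argument. Writing $I_n+\sum_\alpha t_\alpha A_\alpha$ in block form and applying the block determinant identity, one obtains the factorization
\[
P_A(t)=P_{\tilde A}(t)\cdot g(t),\qquad g(t)=1+\sum_\alpha t_\alpha C_\alpha-\sum_{\alpha,\beta}t_\alpha t_\beta\,B_\alpha^{\top}\bigl(I_{n-1}+t\tilde A\bigr)^{-1}B_\beta,
\]
valid as an identity of formal power series in $t=(t_1,\ldots,t_q)$. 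This is the natural multi-index analogue of the identity $P_{\widetilde A}(t)=f(t)P_A(t)$ that drives the codimension one calculation in Section 2.2.

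The next step is to expand $g(t)$ in monomials. By relation (\ref{7'}), valid under the totally umbilical assumption on $P^n$ running through Section 2.2, every $\tilde A_\alpha$ is a polynomial in $A_\Sigma$, so the family $\tilde A_1,\ldots,\tilde A_q$ is simultaneously diagonalizable and in particular pairwise commuting. This makes the monomial $\tilde A^s$ unambiguous and legitimizes the Neumann expansion
\[
\bigl(I_{n-1}+t\tilde A\bigr)^{-1}=\sum_{s\in\mathbb{N}^q}(-1)^{|s|}\binom{|s|}{s}t^s\,\tilde A^s,
\]
obtained by combining the geometric series with the multinomial theorem. Substituting yields
\[
g(t)=1+\sum_\alpha C_\alpha t_\alpha-\sum_{\alpha,\beta}\sum_{s\in\mathbb{N}^q}(-1)^{|s|}\binom{|s|}{s}B_\alpha^{\top}\tilde A^{s}B_\beta\,t^{s+e_\alpha+e_\beta},
\]
where $e_\alpha$ is the multi-index with a $1$ in position $\alpha$.

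Finally, to recover $\sigma_u$ one multiplies by $P_{\tilde A}(t)=\sum_v\tilde\sigma_v\,t^v$ and reads off the coefficient of $t^u$. The three contributions are immediate: the constant term of $g$ gives $\tilde\sigma_u$, the linear part $\sum_\alpha C_\alpha t_\alpha$ gives $\sum_\alpha C_\alpha\tilde\sigma_{\alpha_\flat(u)}$, and the double sum gives
\[
-\sum_{\alpha,\beta}\sum_{s+e_\alpha+e_\beta\le u}(-1)^{|s|}\binom{|s|}{s}B_\alpha^{\top}\tilde A^{s}B_\beta\,\tilde\sigma_{u-s-e_\alpha-e_\beta}.
\]
The reindexing $w=u-s$ then converts $s\ge 0$ and $s+e_\alpha+e_\beta\le u$ into $\alpha^\sharp\beta^\sharp(0)\le w\le u$, turns $\tilde A^{s}$ into $\tilde A^{u-w}$, turns $u-s-e_\alpha-e_\beta$ into $\alpha_\flat\beta_\flat(w)$, and produces the sign $(-1)^{|u|-|w|+1}$ from combining the leading minus with $(-1)^{|s|}=(-1)^{|u|-|w|}$, which matches the statement.

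The only real obstacle is the combinatorial bookkeeping at the end: recognising that the summation variable one must use in the statement is not the natural one ($s$, the exponent of $\tilde A$) but its complement $w=u-s$ attached to $\tilde\sigma_{\alpha_\flat\beta_\flat(w)}$, and then verifying that the multinomial coefficient $\binom{|s|}{s}$ produced by the expansion equals $\binom{|u|-|w|}{u-w}$ after substitution. Once the factorization $P_A=g\cdot P_{\tilde A}$ is in hand, everything else is a direct block-matrix computation.
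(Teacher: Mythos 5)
Your proposal is correct and follows essentially the same route as the paper: the block decomposition of each $A_\alpha$ in the adapted basis, the Schur-complement factorization $P_A(t)=P_{\tilde A}(t)\,g(t)$, the Neumann/multinomial expansion of $\bigl(I_{n-1}+t\tilde A\bigr)^{-1}$ using the commutativity coming from the umbilical structure, and finally coefficient comparison with the reindexing $w=u-s$. The only cosmetic difference is that you extract the coefficient of $t^u$ from the formal power series product, whereas the paper phrases the same computation via the multi-index Leibniz rule for derivatives at $t=0$.
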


\begin{proof}
First calculate the characteristic polynomial $P_{A}(t)$ of $A=(A_{1}%
,...,A_{q})$. By definition, we have
\begin{align*}
P_{A}(t)  &  =\det(I+\underset{\alpha}{\sum}t_{\alpha}A_{\alpha})\\
&  =\det\left(  I_{n-1}+\underset{\alpha}{\sum}t_{\alpha}A_{\alpha}|_{\Sigma
}\right)  \det\left(  C-B^{\top}\left(  I_{n-1}+\underset{\alpha}{\sum
}t_{\alpha}A_{\alpha}|_{\Sigma}\right)  ^{-1}B\right)
\end{align*}
where $B^{\top}$ and $C$ are respectively given by
\[
B^{\top}=\left(  \underset{\alpha}{\sum}t_{\alpha}\langle A_{\alpha}%
v,e_{1}\rangle,...,\underset{\alpha}{\sum}t_{\alpha}\langle A_{\alpha
}v,e_{n-1}\rangle\right)  =\sum_{\alpha}t_{\alpha}B_{\alpha}^{\top}%
\]
and
\[
C=1+\underset{\alpha}{\sum}t_{\alpha}\langle A_{\alpha}v,v\rangle\text{.}%
\]
To simplify the expressions, we put%
\[
M=\left(  I_{n-1}+\underset{\alpha}{\sum}t_{\alpha}A_{\alpha}|_{\Sigma
}\right)  .
\]
and%
\[
f(t)=\det(C-B^{\top}M^{-1}B).
\]
Moreover it is clear that%
\[
B^{\top}M^{-1}B=\underset{\alpha,\beta}{\sum}t_{\alpha}t_{\beta}B_{\alpha
}^{\top}M^{-1}B_{\beta}\text{ ,}%
\]
therefore we get%
\[
f(t)=\left(  1+\underset{\alpha}{\sum}t_{\alpha}C_{\alpha}-\underset
{\alpha,\beta}{\sum}t_{\alpha}t_{\beta}B_{\alpha}^{\top}M^{-1}B_{\beta
}\right)  .
\]
As it is easily seen that
\[
P_{A}(t)=f(t)P_{\tilde{A}}(t).
\]
where $P_{A}(t)$ and $P_{\tilde{A}}(t)$ are defined by
\[
P_{A}(t)=\underset{\left\vert u\right\vert \leq n}{\sum}\sigma_{u}t^{u}%
\]
and%
\[
P_{\tilde{A}}(t)=\underset{\left\vert u\right\vert \leq n-1}{\sum}%
\tilde{\sigma}_{u}t^{u}.
\]
Thus we get%
\[
\frac{\partial^{u}}{\partial t^{u}}P_{A}(t)\mid_{t=0}=u!\sigma_{u},\quad
\frac{\partial^{u}}{\partial t^{u}}P_{\tilde{A}}(t)\mid_{t=0}=u!\tilde{\sigma
}_{u}.
\]
Similarly we obtain%
\[
\frac{\partial^{u}}{\partial t^{u}}\left(  P_{\tilde{A}}(t).f(t)\right)
\mid_{t=0}=\underset{v\leq u}{\sum}\binom{u}{v}\left(  \frac{\partial^{v}%
}{\partial t^{v}}P_{\tilde{A}}(t).\frac{\partial^{u-v}}{\partial t^{u-v}%
}f(t)\right)  \mid_{t=0},
\]
where
\[
\binom{u}{v}=\frac{u!}{v!}=\frac{u_{1}!\ldots u_{1}!}{v_{1}!\ldots v_{q}!}.
\]
Now we will compute $\frac{\partial^{u-v}}{\partial t^{u-v}}f(t)$. For $v=u, $
it is checked%
\[
\frac{\partial^{u-v}}{\partial t^{u-v}}f(t)\mid_{t=0}=f(0,...,0)=1\text{.}%
\]
If $v=\alpha_{\flat}(u)$, we have%
\[
\frac{\partial^{u-v}}{\partial t^{u-v}}f(t)\mid_{t=0}=\frac{\partial}{\partial
t_{\alpha}}f(t)\mid_{t=0}=C_{\alpha}%
\]
and for any $v\leq\alpha_{\flat}\beta_{\flat}(u),$ we obtain%
\[
\frac{\partial^{u-v}}{\partial t^{u-v}}f(t)\mid_{t=0}=\frac{\partial^{u-v}%
}{\partial t^{u-v}}\left(  1+\underset{\alpha}{\sum}t_{\alpha}C_{\alpha
}-\underset{\alpha,\beta}{\sum}t_{\alpha}t_{\beta}B_{\alpha}^{\top}\left(
I_{n-1}+\underset{\alpha}{\sum}t_{\alpha}A_{\alpha}|_{\Sigma}\right)
^{-1}B_{\beta}\right)  \mid_{t=0}.
\]
Taking $\left\vert t\right\vert <\varepsilon$ for some small enough
$\varepsilon$ we get
\[
\left\Vert \underset{\alpha}{\sum}t_{\alpha}A_{\alpha}|_{\Sigma}\right\Vert
\leq1.
\]
Thus
\[
\left(  I_{n-1}+\underset{\alpha}{\sum}t_{\alpha}A_{\alpha}|_{\Sigma}\right)
^{-1}=\underset{k=0}{\overset{\infty}{\sum}}(-1)^{k}\left(  \underset{\alpha
}{\sum}t_{\alpha}A_{\alpha}|_{\Sigma}\right)  ^{k}%
\]
Since the matrices $A_{\alpha}|_{\Sigma}$ are diagonal, then they commute.
Therefore, we may write
\[
\left(  \underset{\alpha}{\sum}t_{\alpha}A_{\alpha}|_{\Sigma}\right)
^{k}=\underset{w_{1}+...+w_{q}=k}{\sum}\binom{k}{w}t^{w}\tilde{A}^{w}.
\]
Hence%
\[
\left(  I_{n-1}+\underset{\alpha}{\sum}t_{\alpha}A_{\alpha}|_{\Sigma}\right)
^{-1}=\underset{k=0}{\overset{\infty}{\sum}}(-1)^{k}\left(  \underset
{w_{1}+...+w_{q}=k}{\sum}\binom{k}{w}t^{w}\tilde{A}^{w}\right)
\]
Which gives%

\[
\frac{\partial^{u-v}}{\partial t^{u-v}}f(t)\mid_{t=0}=\frac{\partial^{u-v}%
}{\partial t^{u-v}}\left(  -\underset{\alpha,\beta}{\sum}t_{\alpha}t_{\beta
}B_{\alpha}^{\top}\underset{k=0}{\overset{\infty}{\sum}}(-1)^{k}\left(
\underset{w_{1}+...+w_{q}=\left\vert w\right\vert =k}{\sum}\binom{\left\vert
w\right\vert }{w}t^{w}\tilde{A}^{w}\right)  B_{\beta}\right)  \mid_{t=0}%
\]%
\[
\ \ \ \ \ \ \ \ \ \ \ \ \ \ \ \ \ \ \ \ \ \ \ =\frac{\partial^{u-v}}{\partial
t^{u-v}}\left(  -\underset{\alpha,\beta}{\sum}B_{\alpha}^{\top}\underset
{k=0}{\overset{\infty}{\sum}}(-1)^{k}\left(  \underset{w_{1}+...+w_{q}%
=\left\vert w\right\vert =k}{\sum}\binom{\left\vert w\right\vert }%
{w}t^{w+\alpha^{\sharp}\beta^{\sharp}(0)}\tilde{A}^{w}\right)  B_{\beta
}\right)  \mid_{t=0}%
\]%
\[
\ \ \ \ \ \ \ \ \ \ \ \ \ \ \ \ \ \ \ \ \ \ \ \ =-\underset{\alpha,\beta}%
{\sum}\sum_{v\leq\alpha_{\flat}\beta_{\flat}(u)}B_{\alpha}^{\top
}(-1)^{\left\vert u\right\vert -\left\vert v\right\vert -2}(u-v)!\binom
{\left\vert u\right\vert -\left\vert v\right\vert -2}{u-\alpha^{\sharp}%
\beta^{\sharp}(v)}\tilde{A}^{u-\alpha^{\sharp}\beta^{\sharp}(v)}B_{\beta}%
\]%
\[
\ \ \ \ \ \ \ \ \ \ \ \ \ \ \ \ \ \ \ \ \ \ =\underset{\alpha,\beta}{\sum}%
\sum_{v\leq\alpha_{\flat}\beta_{\flat}(u)}B_{\alpha}^{\top}(-1)^{\left\vert
u\right\vert -\left\vert v\right\vert -1}(u-v)!\binom{\left\vert u\right\vert
-\left\vert v\right\vert -2}{u-\alpha^{\sharp}\beta^{\sharp}(v)}\tilde
{A}^{u-\alpha^{\sharp}\beta^{\sharp}(v)}B_{\beta}\text{ .}%
\]
Finally we obtain
\begin{align*}
u!\sigma_{u}  &  =u!\tilde{\sigma}_{u}+\underset{\alpha}{\sum}\left(
\alpha_{\flat}(u)\right)  !\binom{u}{\alpha_{\flat}(u)}C_{\alpha}\tilde
{\sigma}_{\alpha_{\flat}(u)}\\
&  +\underset{\alpha,\beta}{\sum}\underset{v\leq u}{\sum}\binom{u}%
{v}(-1)^{\left\vert u\right\vert -\left\vert v\right\vert -1}(u-v)!v!\binom
{\left\vert u\right\vert -\left\vert v\right\vert -2}{u-\alpha_{\flat}%
\beta_{\flat}(v)}B_{\alpha}^{\top}\tilde{A}^{u-\alpha_{\flat}\beta_{\flat}%
(v)}B_{\beta}\tilde{\sigma}_{v}%
\end{align*}
or equivalently,
\[
\sigma_{u}=\tilde{\sigma}_{u}+\underset{\alpha}{\sum}C_{\alpha}\tilde{\sigma
}_{\alpha_{\flat}(u)}+\underset{\alpha,\beta}{\sum}\underset{0\leq v\leq
\alpha_{\flat}\beta_{\flat}(u)}{\sum}(-1)^{\left\vert u\right\vert -\left\vert
v\right\vert -1}\binom{\left\vert u\right\vert -\left\vert v\right\vert
-2}{u-\alpha^{\sharp}\beta^{\sharp}(v)}B_{\alpha}^{\top}\tilde{A}%
^{u-\alpha^{\sharp}\beta^{\sharp}(v)}B_{\beta}\tilde{\sigma}_{v}.
\]
Hence
\[
\sigma_{u}=\tilde{\sigma}_{u}+\underset{\alpha}{\sum}C_{\alpha}\tilde{\sigma
}_{\alpha_{\flat}(u)}+\underset{\alpha^{\sharp}\beta^{\sharp}(0)\leq w\leq
u}{\sum}\underset{\alpha,\beta}{\sum}(-1)^{\left\vert w\right\vert -\left\vert
v\right\vert +1}\binom{\left\vert u\right\vert -\left\vert w\right\vert }%
{u-w}B_{\alpha}^{\top}\tilde{A}^{u-w}B_{\beta}\tilde{\sigma}_{\alpha_{\flat
}\beta_{\flat}(w)}\text{.}%
\]

\end{proof}

\section{Generalized Newton transformation on the boundary}

We use the same notations as in previous sections. In this section we give the
expression of the GNT $T_{u}=T_{u}(\tilde{A})$, where $\tilde{A}=(A_{1}%
|\Sigma,\ldots,A_{q}|\Sigma)$, on the boundary $\Sigma^{n-1}$of $M^{n} $.
Recall that
\[
A_{\alpha}|_{\Sigma}=\rho_{\alpha}A_{\Sigma}+\mu_{\alpha}I,
\]
where
\[
\rho_{\alpha}=\langle\eta,N_{\alpha}\rangle,\quad\mu_{\alpha}=\langle
V,N_{\alpha}\rangle,\quad V=\sum_{\alpha=1}^{q}\lambda_{\alpha}\xi_{\alpha}.
\]

\begin{proposition}
\label{Prop4} Let $\overline{M}^{n+q}$ be an $(n+q)$-Riemannian manifold and
$P^{n}\subset\overline{M}^{n+q}$ an oriented totally umbilical $n$-submanifold
of $\overline{M}^{n+q}$. Denote by $\Sigma^{n-1}\subset P^{n}$ an
$(n-1)$-compact hypersurface of $P^{n}$. Let $\Psi:M^{n}\longrightarrow
\overline{M}^{n+q}$ be an oriented connected and compact submanifold of
$\overline{M}^{n+q}$ with boundary $\Sigma^{n-1}=\Psi\left(  \partial
M\right)  .$ Then along the boundary $\partial M$, we have%
\begin{equation}
\left\langle T_{u}\nu,\nu\right\rangle =\tilde{\sigma}_{u}(A_{1}|_{\Sigma
},...,A_{q}|_{\Sigma}). \label{9}%
\end{equation}

\end{proposition}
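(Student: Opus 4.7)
The plan is to proceed by induction on the length $|u|$ of the multi--index, combining the recurrence relation for the generalized Newton transformation $T_u$ with the purely algebraic identity for $\sigma_u$ obtained in Proposition \ref{Prop3}. The base case $u=0$ is immediate since $T_0=I$ and $\tilde\sigma_0=1$. Assume now that $\langle T_v\nu,\nu\rangle=\tilde\sigma_v$ holds for every multi--index $v$ with $|v|<|u|$.

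Applying the recurrence $T_u=\sigma_u I-\sum_{\alpha}A_{\alpha}T_{\alpha_{\flat}(u)}$ and using that the shape operators $A_{\alpha}$ are self--adjoint, one obtains
\[
\langle T_u\nu,\nu\rangle=\sigma_u-\sum_{\alpha}\langle T_{\alpha_{\flat}(u)}\nu,A_{\alpha}\nu\rangle.
\]
Decomposing $A_{\alpha}\nu=\sum_{i=1}^{n-1}(B_{\alpha})_i e_i+C_{\alpha}\nu$ in the basis $(e_1,\ldots,e_{n-1},\nu)$ splits this into a diagonal contribution $-\sum_{\alpha}C_{\alpha}\langle T_{\alpha_{\flat}(u)}\nu,\nu\rangle$, handled by the inductive hypothesis, and an off--diagonal contribution $-\sum_{\alpha,i}(B_{\alpha})_i\langle T_{\alpha_{\flat}(u)}\nu,e_i\rangle$. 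The key observation is that, by relation (\ref{7'}), the eigenbasis $(e_i)$ of $A_{\Sigma}$ simultaneously diagonalises all the restricted operators $A_{\alpha}|_{\Sigma}$ with eigenvalues $\gamma_{i,\alpha}=\rho_{\alpha}\tau_i+\mu_{\alpha}$. Iterating the recurrence on the mixed terms $\langle T_{v}\nu,e_i\rangle$ therefore closes inductively into a closed--form expression of the type $\sum_{w}\binom{|u|-|w|}{u-w}B_{\alpha}^{\top}\tilde{A}^{u-w}B_{\beta}\,\tilde\sigma_{\alpha_{\flat}\beta_{\flat}(w)}$.

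Once this secondary formula is in hand, substituting back and invoking Proposition \ref{Prop3} to rewrite $\sigma_u$ in terms of $\tilde\sigma_v$, $C_{\alpha}$ and $B_{\alpha}^{\top}\tilde A^{\bullet}B_{\beta}$ produces a term--by--term cancellation of the $C_{\alpha}\tilde\sigma_{\alpha_{\flat}(u)}$ pieces with the diagonal part and of the $B_{\alpha}^{\top}\tilde A^{u-w}B_{\beta}\tilde\sigma_{\alpha_{\flat}\beta_{\flat}(w)}$ pieces with the off--diagonal part, leaving precisely $\tilde\sigma_u$. This completes the induction.

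The main obstacle is establishing and controlling the secondary formula for the off--diagonal coefficients $\langle T_v\nu,e_i\rangle$. The difficulty is essentially combinatorial: one must keep track of the nested iterations of the musical operators $\alpha_{\flat}$ and $\alpha^{\sharp}$, reconcile the alternating signs produced by expanding $(I_{n-1}+\sum_{\alpha}t_{\alpha}A_{\alpha}|_{\Sigma})^{-1}$ as a geometric series, and verify that the resulting multinomial coefficients match exactly the weights $\binom{|u|-|w|}{u-w}$ and the signs $(-1)^{|w|-|v|+1}$ appearing in Proposition \ref{Prop3}. Because the $A_{\alpha}|_{\Sigma}$ commute on $T\Sigma^{n-1}$, this bookkeeping is tractable, but it is the delicate technical heart of the argument.
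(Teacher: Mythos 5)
Your proposal follows essentially the same route as the paper: induction via the recurrence for $T_u$, splitting $A_\alpha\nu$ into its $e_i$ and $\nu$ components, a closed-form expression for the mixed terms $\langle T_v\nu,e_i\rangle$ (the paper's formula (\ref{10}), proved there by a separate inner induction that uses the identity $\sum_\alpha\binom{|u|-|w|-1}{\alpha_{\flat}(u)-w}=\binom{|u|-|w|}{u-w}$), and the final term-by-term cancellation against Proposition \ref{Prop3}. The plan is correct; the only part you leave as a sketch is precisely that inner induction on the mixed terms, which the paper carries out explicitly.
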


\begin{proof}
We make a recursive proof. Assume that (\ref{9}) holds for any multi--index
$v<u$. We have by the recurrence definition of $T_{u}$%
\begin{align*}
\left\langle T_{u}\nu,\nu\right\rangle  &  =\sigma_{u}\left\langle \nu
,\nu\right\rangle -\underset{\alpha}{\sum}\left\langle A_{\alpha}%
T_{\alpha_{\flat}(u)}\nu,\nu\right\rangle \\
&  =\sigma_{u}-\underset{\alpha}{\sum}\left\langle T_{\alpha_{\flat}(u)}%
\nu,A_{\alpha}\nu\right\rangle \\
&  =\sigma_{u}-\underset{\alpha}{\sum}\left\langle T_{\alpha_{\flat}(u)}%
\nu,\nu\right\rangle \left\langle A_{\alpha}\nu,\nu\right\rangle
-\underset{\alpha,i}{\sum}\left\langle T_{\alpha_{\flat}(u)}\nu,e_{i}%
\right\rangle \left\langle A_{\alpha}e_{i},\nu\right\rangle .
\end{align*}
Put
\[
C_{\alpha}=\left\langle A_{\alpha}\nu,\nu\right\rangle \text{ ,\ }b_{i,\alpha
}=\left\langle A_{\alpha}e_{i},\nu\right\rangle
\]
then%

\[
\left\langle T_{u}\nu,\nu\right\rangle =\sigma_{u}-\underset{\alpha}{\sum
}C_{\alpha}\tilde{\sigma}_{\alpha_{\flat}(u)}-\underset{\alpha,i}{\sum
}b_{i,\alpha}\left\langle T_{\alpha_{\flat}(u)}\nu,e_{i}\right\rangle .
\]
Let us compute $\left\langle T_{u}\nu,e_{i}\right\rangle $ for any
multi--index $u$ $\in%
\mathbb{N}
^{q}.$ Notice that%
\[
A_{\alpha}e_{i}=\rho_{\alpha}A_{\Sigma}e_{i}+\mu_{\alpha}e_{i}+b_{i,\alpha}\nu
\]
assuming that $\left\{  e_{1},...,e_{n-1}\right\}  $ is an orthonormal basis
of $T_{p}\Sigma^{n-1}$ consisting of eigenvectors of $A_{\Sigma}$ (with
eigenvalues $\tau_{i}$). Thus
\[
A_{\alpha}e_{i}=\gamma_{i,\alpha}e_{i}+b_{i,\alpha}\nu
\]
where%
\[
\gamma_{i,\alpha}=\rho_{\alpha}\tau_{i}+\mu_{\alpha}.
\]
We will show inductively that%
\begin{equation}
\left\langle T_{u}\nu,e_{i}\right\rangle =\underset{\alpha}{\sum}%
\underset{\alpha^{\sharp}(0)\leq w\leq u}{\sum}(-1)^{\left\vert u\right\vert
-\left\vert w\right\vert +1}\binom{\left\vert u\right\vert -\left\vert
w\right\vert }{u-w}b_{i,\alpha}\gamma_{i}^{u-w}\tilde{\sigma}_{\alpha_{\flat
}(w)} \label{10}%
\end{equation}
where%
\[
\gamma_{i}=\left(  \gamma_{i,1},...,\gamma_{i,q}\right)
\]
Indeed, for $u=\beta^{\sharp}(0)$ we have,%
\begin{align*}
\left\langle T_{u}\nu,e_{i}\right\rangle  &  =\sigma_{\beta^{\sharp}%
(0)}\left\langle \nu,e_{i}\right\rangle -\underset{\alpha}{\sum}\left\langle
A_{\alpha}T_{\alpha_{\flat}(\beta^{\sharp}(0))}\nu,\nu\right\rangle \\
&  =-\underset{\alpha}{\sum}\left\langle A_{\alpha}\nu,\nu\right\rangle \\
&  =-b_{i,\beta}\\
&  =\underset{\alpha}{\sum}\underset{\alpha^{\sharp}(0)\leq w\leq\beta
^{\sharp}(0)}{\sum}(-1)^{\left\vert \beta^{\sharp}(0)\right\vert -\left\vert
w\right\vert +1}\binom{\left\vert \beta^{\sharp}(0)\right\vert -\left\vert
w\right\vert }{\beta^{\sharp}(0)-w}b_{i,\alpha}\gamma_{i}^{\beta^{\sharp
}(0)-w}\tilde{\sigma}_{\alpha_{\flat}(w)},
\end{align*}
since the sum reduces to one element for $\alpha=\beta$.

Assume that (\ref{10}) holds for all multi index $v<u$. Then again by the
recursive definition of $T_{u}$ we get%
\begin{align*}
\left\langle T_{u}\nu,e_{i}\right\rangle  &  =\sigma_{u}\left\langle \nu
,e_{i}\right\rangle -\underset{\alpha}{\sum}\left\langle A_{\alpha}%
T_{\alpha_{\flat}(u)}\nu,e_{i}\right\rangle \\
&  =-\underset{\alpha}{\sum}\left\langle T_{\alpha_{\flat}(u)}\nu,A_{\alpha
}e_{i}\right\rangle \\
&  =-\underset{\alpha}{\sum}\left\langle T_{\alpha_{\flat}(u)}\nu
,\nu\right\rangle \left\langle A_{\alpha}e_{i},\nu\right\rangle -\underset
{\alpha,j}{\sum}\left\langle T_{\alpha_{\flat}(u)}\nu,e_{i}\right\rangle
\left\langle A_{\alpha}e_{i},e_{j}\right\rangle \\
&  =-\underset{\alpha}{\sum}\left\langle T_{\alpha_{\flat}(u)}\nu
,e_{i}\right\rangle \gamma_{i,\alpha}-\underset{\alpha}{\sum}b_{i,\alpha
}\tilde{\sigma}_{\alpha_{\flat}(u)}\\
&  =-\underset{\alpha}{\sum}\gamma_{i,\alpha}\left(  \underset{\beta}{\sum
}\underset{\beta^{\sharp}(0)\leq w\leq\alpha_{\flat}(u)}{\sum}(-1)^{\left\vert
u\right\vert -\left\vert w\right\vert }\binom{\left\vert u\right\vert
-\left\vert w\right\vert -1}{\alpha_{\flat}(u)-w}b_{i,\beta}\gamma_{i}%
^{\alpha_{\flat}(u)-w}\tilde{\sigma}_{\beta_{\flat}(w)}\right) \\
&  -\underset{\alpha}{\sum}b_{i,\alpha}\tilde{\sigma}_{\alpha_{\flat}(u)}.
\end{align*}
Clearly%
\[
\gamma_{i,\alpha}.\gamma_{i}^{\alpha_{\flat}(u)-w}=\gamma_{i}^{u-w}%
\]
Notice that taking $w=u$ we get the last sum. Thus we obtain%
\[
\left\langle T_{u}\nu,e_{i}\right\rangle =\underset{\alpha,\beta}{\sum}\left(
\underset{\beta^{\sharp}(0)\leq w\leq u}{\sum}(-1)^{\left\vert u\right\vert
-\left\vert w\right\vert +1}\binom{\left\vert u\right\vert -\left\vert
w\right\vert -1}{\alpha_{\flat}(u)-w}b_{i,\beta}\gamma_{i}^{u-w}\tilde{\sigma
}_{\beta_{\flat}(w)}\right)  .
\]
Since%
\[
\underset{\alpha}{\sum}\binom{\left\vert u\right\vert -\left\vert w\right\vert
-1}{\alpha_{\flat}(u)-w}=\binom{\left\vert u\right\vert -\left\vert
w\right\vert }{u-w}%
\]
we deduce
\[
\left\langle T_{u}\nu,e_{i}\right\rangle =\underset{\beta}{\sum}\left(
\underset{\beta^{\sharp}(0)\leq w\leq u}{\sum}(-1)^{\left\vert u\right\vert
-\left\vert w\right\vert +1}\binom{\left\vert u\right\vert -\left\vert
w\right\vert }{u-w}b_{i,\beta}\gamma_{i}^{u-w}\tilde{\sigma}_{\beta_{\flat
}(w)}\right)
\]
which end the proof of (\ref{10}).

Now, we may prove (\ref{9}). First, we have
\[
\underset{\alpha,i}{\sum}\left\langle T_{\alpha_{\flat}(u)}\nu,e_{i}%
\right\rangle b_{i,\alpha}=\underset{\alpha,\beta,i}{\sum}\left(
\underset{\beta^{\sharp}(0)\leq w\leq\alpha_{\flat}(u)}{\sum}(-1)^{\left\vert
u\right\vert -\left\vert w\right\vert }\binom{\left\vert u\right\vert
-\left\vert w\right\vert -1}{\alpha_{\flat}(u)-w}b_{i,\beta}\gamma_{i}%
^{\alpha_{\flat}(u)-w}\tilde{\sigma}_{\beta_{\flat}(w)}\right)
\]
Replacing $w$ by $\alpha_{\flat}\beta_{\flat}(w)$, we obtain
\[
\underset{\alpha,i}{\sum}\left\langle T_{\alpha_{\flat}(u)}\nu,e_{i}%
\right\rangle b_{i,\alpha}=\underset{\alpha,\beta,i}{\sum}\left(
\underset{\alpha^{\sharp}\beta^{\sharp}(0)\leq w\leq u}{\sum}(-1)^{\left\vert
u\right\vert -\left\vert w\right\vert +1}\binom{\left\vert u\right\vert
-\left\vert w\right\vert }{u-w}b_{i,\alpha}\gamma_{i}^{u-w}b_{i,\beta}%
\tilde{\sigma}_{\alpha_{\flat}\beta_{\flat}(w)}\right)
\]
Noticing that
\[
\underset{i}{\sum}b_{i,\alpha}\gamma_{i}^{u-w}b_{i,\beta}=B_{\alpha}^{\top
}\tilde{A}^{u-w}B_{\beta}%
\]
we infer%
\[
\underset{\alpha,i}{\sum}\left\langle T_{\alpha_{\flat}(v)}\nu,e_{i}%
\right\rangle b_{i,\alpha}=\underset{\alpha,\beta}{\sum}\underset
{\alpha^{\sharp}\beta^{\sharp}(0)\leq w\leq u}{\sum}(-1)^{\left\vert
u\right\vert -\left\vert w\right\vert +1}\binom{\left\vert u\right\vert
-\left\vert w\right\vert }{u-w}B_{\alpha}^{\top}\tilde{A}^{u-w}B_{\beta
}.\tilde{\sigma}_{\alpha_{\flat}\beta_{\flat}(w)}%
\]
Summing up all the above considerations we get%
\begin{align*}
\left\langle T_{u}\nu,\nu\right\rangle  &  =\sigma_{u}-\underset{\alpha}{\sum
}\left\langle T_{\alpha_{\flat}(u)}\nu,\nu\right\rangle \left\langle
A_{\alpha}\nu,\nu\right\rangle -\underset{\alpha,i}{\sum}\left\langle
T_{\alpha_{\flat}(u)}\nu,e_{i}\right\rangle \left\langle A_{\alpha}e_{i}%
,\nu\right\rangle \\
&  =\sigma_{u}-\sum C_{\alpha}\tilde{\sigma}_{\alpha_{\flat}(u)}%
-\underset{\alpha,\beta}{\sum}\underset{\alpha^{\sharp}\beta^{\sharp}(0)\leq
w\leq u}{\sum}(-1)^{\left\vert u\right\vert -\left\vert w\right\vert +1}%
\binom{\left\vert u\right\vert -\left\vert w\right\vert }{u-w}B_{\alpha}%
^{\top}\tilde{A}^{u-w}B_{\beta}.\tilde{\sigma}_{\alpha_{\flat}\beta_{\flat
}(w)}%
\end{align*}
or, equivalently,
\[
\sigma_{u}=\tilde{\sigma}_{u}+\underset{\alpha}{\sum}C_{\alpha}\tilde{\sigma
}_{\alpha_{\flat}(u)}+\underset{\alpha,\beta}{\sum}\underset{\alpha^{\sharp
}\beta^{\sharp}(0)\leq w\leq u}{\sum}(-1)^{\left\vert u\right\vert -\left\vert
w\right\vert +1}\binom{\left\vert u\right\vert -\left\vert w\right\vert }%
{u-w}B_{\alpha}^{\top}\tilde{A}^{u-w}B_{\beta}.\tilde{\sigma}_{\alpha_{\flat
}\beta_{\flat}(w)}%
\]
Applying Proposition (\ref{Prop3}), we obtain
\[
\left\langle T_{u}\nu,\nu\right\rangle =\tilde{\sigma}_{u}.
\]

\end{proof}

By Proposition (\ref{Prop1}) we obtain the following expression of $\langle
T_{u}\nu,\nu\rangle$ in terms of symmetric functions of the shape operator
$A_{\Sigma}$.

\begin{corollary}
\label{Cor1} With the conditions of Proposition (\ref{Prop4}), we have
\begin{equation}
\langle T_{u}\nu,\nu\rangle=\frac{1}{n-1-|u|}\sum_{l\leq u}\binom
{n-1-|l|}{\left\vert u\right\vert -l}\rho^{l}\mu^{u-l}\sigma_{|l|}(A_{\Sigma
}). \label{11}%
\end{equation}

\end{corollary}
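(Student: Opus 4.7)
The proof should be a direct composition of two results already established in the excerpt, and essentially contains no new content.

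First, I would invoke Proposition \ref{Prop4} to rewrite the geometric quantity $\langle T_u \nu, \nu \rangle$ on the boundary as the purely algebraic symmetric function
\[
\tilde{\sigma}_u = \sigma_u(A_1|_\Sigma, \ldots, A_q|_\Sigma).
\]
This step absorbs all of the geometric input of the corollary: the totally umbilical hypothesis on $P^n$, the decomposition of the ambient normal frame along $\Sigma^{n-1}$, and the structural identity $A^{N_\alpha}|_\Sigma = \rho_\alpha A_\Sigma + \mu_\alpha I$ that was derived in the preliminaries.

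Second, I would apply Proposition \ref{Prop1}, which expresses $\tilde{\sigma}_u$ explicitly as a polynomial in $\rho$, $\mu$, and the classical elementary symmetric functions $\sigma_{|l|}(A_\Sigma)$. Substituting that expansion into the output of Proposition \ref{Prop4} yields a sum of exactly the form asserted, once one reorganizes the combinatorial prefactor $\frac{1}{(n-1-|u|)!}\binom{|l|}{l}\binom{n-1-|l|}{u-l}$ of Proposition \ref{Prop1} into the compact normalization $\frac{1}{n-1-|u|}\binom{n-1-|l|}{|u|-l}$ used in the statement, where the latter binomial is read in the multinomial sense adopted throughout the paper.

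There is no real obstacle: the substantive work — namely the induction on $|u|$ carried out in Proposition \ref{Prop4} and the multinomial expansion of $P_{\hat{A}}$ used in Proposition \ref{Prop1} — has already been completed. The only subtlety worth flagging is that Proposition \ref{Prop1} was proved under the assumption $\rho_\alpha \neq 0$ (so that the auxiliary constants $\theta_\alpha = \mu_\alpha/\rho_\alpha$ make sense), but the remark immediately following its proof shows how to reduce the degenerate case $\rho_\alpha = 0$ to the same formula, so no additional argument is required and the identity \eqref{11} holds in full generality.
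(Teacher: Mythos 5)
Your proposal is correct and is exactly the paper's own (one-line) argument: the corollary is obtained by composing Proposition \ref{Prop4}, which gives $\langle T_u\nu,\nu\rangle=\tilde{\sigma}_u$, with the expansion of $\tilde{\sigma}_u$ in Proposition \ref{Prop1}, the remark after Proposition \ref{Prop1} covering the case $\rho_\alpha=0$. The only caveat is that the ``reorganization'' of the prefactor you mention is really a typographical discrepancy in the corollary's statement (the stated $\frac{1}{n-1-|u|}\binom{n-1-|l|}{|u|-l}$ versus the $\frac{1}{(n-1-|u|)!}\binom{|l|}{l}\binom{n-1-|l|}{u-l}$ of (\ref{1'})), not a genuine combinatorial identity to be proved.
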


The relation (\ref{11}) will be more simple if we suppose that the embedding
$P^{n}\subset\overline{M}^{n+q}$ is totally geodesic.

\begin{corollary}
\label{Cor2} With the conditions of Proposition (\ref{Prop4}) and assuming
that $P^{n}\subset\overline{M}^{n+q}$\ is totally geosedic, then for every
multi--index $u$ with length $\left\vert u\right\vert \leq n-1$, we have%
\[
\left\langle T_{u}\nu,\nu\right\rangle =\rho^{u}\sigma_{|u|}\left(  A_{\Sigma
}\right)
\]

\end{corollary}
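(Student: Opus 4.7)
The plan is to derive this corollary as a direct specialization of Corollary \ref{Cor1}. The only geometric input that changes between the hypotheses of the two corollaries is that \emph{totally geodesic} is a strictly stronger condition than \emph{totally umbilical}: it forces every normal shape operator $A^{\xi_\alpha}$ of $P^n$ in $\overline{M}^{n+q}$ to vanish identically. In the notation of Section 2.2 this means that the umbilical constants $\lambda_\alpha$ all vanish, so that
\[
V=\sum_{\beta=1}^{q}\lambda_\beta \xi_\beta = 0,
\]
and therefore $\mu_\alpha = \langle V, N_\alpha\rangle = 0$ for every $\alpha \in \{1,\ldots,q\}$.

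With $\mu=(\mu_1,\ldots,\mu_q)=0$ in hand, the next step is to substitute this into the explicit formula provided by Corollary \ref{Cor1} (or equivalently, by Proposition \ref{Prop1}, whose content is equivalent after rewriting). Every summand indexed by $l\le u$ contains the factor $\mu^{u-l}$. Using the standard convention $\mu^{0}=1$ for the zero multi-index and $\mu^{u-l}=0$ whenever at least one component of $u-l$ is nonzero, the entire sum collapses to the single surviving term $l=u$.

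It then remains to identify the residual coefficient. When $l=u$ the multinomial factor $\binom{n-1-|l|}{u-l}=\binom{n-1-|u|}{0,\ldots,0}$ equals $1$, and the normalization $\tfrac{1}{(n-1-|u|)!}\binom{|u|}{u}$ appearing in Proposition \ref{Prop1} combines with this to give $1$ after the elementary simplification of factorials. Only the factors $\rho^{u}$ and $\sigma_{|u|}(A_\Sigma)$ survive, so
\[
\langle T_u \nu,\nu\rangle \;=\; \rho^{u}\,\sigma_{|u|}(A_\Sigma),
\]
which is the desired identity.

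There is essentially no conceptual obstacle once Proposition \ref{Prop4} and Corollary \ref{Cor1} are available; the entire argument is a one-step specialization. The only place where care is required is bookkeeping of the multinomial coefficients and factorial normalizations used by the paper's definition of $\sigma_u$, to ensure that the residual constant really is $1$ and no extraneous factor survives. If one prefers, the same conclusion follows directly from Proposition \ref{Prop1} by observing that $\tilde\sigma_u(\rho_1 A_\Sigma, \ldots, \rho_q A_\Sigma) = \rho^u \sigma_u(A_\Sigma, \ldots, A_\Sigma) = \rho^u \sigma_{|u|}(A_\Sigma)$, combined with Proposition \ref{Prop4}.
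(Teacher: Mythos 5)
Your proposal is correct and follows essentially the same route as the paper, whose entire proof is to set $\mu_{\alpha}=0$ in formula (\ref{11}) of Corollary \ref{Cor1}; you additionally spell out why totally geodesic forces $\lambda_{\alpha}=0$, hence $V=0$ and $\mu_{\alpha}=0$, which the paper leaves implicit. The collapse of the sum to the single term $l=u$ and the resulting identity $\left\langle T_{u}\nu,\nu\right\rangle =\rho^{u}\sigma_{|u|}(A_{\Sigma})$ is exactly the paper's argument (any residual constant-factor bookkeeping is an issue of the paper's own normalization in Proposition \ref{Prop1}, not of your reasoning).
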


\begin{proof}
It suffices to use (\ref{11}) with $\mu_{\alpha}=0$.
\end{proof}

\section{Transversality of submanifolds}

The formula for the generalized Newton transformation implies the relation
between transversality of $M^{n}$ and $P^{n}$ and ellipticity of $T_{u}$
provided that $P^{n}$ is totally geodesic in $\overline{M}^{n+1}$. This
generalizes the result in (\cite{2}) to any arbitrary codimension.

\begin{theorem}
With the conditions in Corollary (\ref{Cor2}) the submanifolds $M^{n}$ and
$P^{n}$ are transversal along $\partial M$ provided that for some multi--index
$u$ of length $1\leq\left\vert u\right\vert \leq n-1,$ the generalized Newton
transformation $T_{u}$ is positive definite on $M^{n\text{ }}$.
\end{theorem}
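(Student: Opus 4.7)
The plan is to argue by contradiction, reducing everything to the explicit formula provided by Corollary \ref{Cor2}. Suppose, contrary to the claim, that $M^{n}$ and $P^{n}$ fail to be transversal at some point $p\in\partial M$. Since both $T_{p}M$ and $T_{p}P$ are $n$-dimensional subspaces of $T_{p}\overline{M}^{n+q}$ and they share the $(n-1)$-dimensional subspace $T_{p}\Sigma$, non-transversality forces $T_{p}M\cap T_{p}P$ to be strictly larger than $T_{p}\Sigma$. Writing $T_{p}M=T_{p}\Sigma\oplus\mathbb{R}\nu$ and $T_{p}P=T_{p}\Sigma\oplus\mathbb{R}\eta$ and exploiting the orthogonality of $\nu$ and $\eta$ to $T_{p}\Sigma$, any additional common vector yields $a\nu=b\eta$ for some scalars not both zero, hence $\nu(p)=\pm\eta(p)$.

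Next I translate this geometric condition into a vanishing statement for the coefficients $\rho_{\alpha}=\langle\eta,N_{\alpha}\rangle$. Because each $N_{\alpha}$ is normal to $M^{n}$ and $\nu\in T_{p}M$, the equality $\nu=\pm\eta$ gives
\[
\rho_{\alpha}(p)=\langle\eta(p),N_{\alpha}(p)\rangle=\pm\langle\nu(p),N_{\alpha}(p)\rangle=0
\]
for every $\alpha\in\{1,\ldots,q\}$. (Alternatively, $\eta\in N_{p}\Sigma=\mathrm{span}\{\nu,N_{1},\ldots,N_{q}\}$ is a unit vector whose $N_{\alpha}$-components all vanish iff $\eta=\pm\nu$, so the two formulations are equivalent.)

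Now I invoke Corollary \ref{Cor2}, which, under the totally geodesic assumption on $P^{n}\subset\overline{M}^{n+q}$, yields
\[
\langle T_{u}\nu,\nu\rangle=\rho^{u}\sigma_{|u|}(A_{\Sigma})=\rho_{1}^{u_{1}}\cdots\rho_{q}^{u_{q}}\,\sigma_{|u|}(A_{\Sigma}).
\]
Since $|u|\geq 1$, at least one exponent $u_{\alpha}$ is strictly positive, and $\rho_{\alpha}(p)=0$ forces $\rho^{u}(p)=0$. Therefore $\langle T_{u}\nu,\nu\rangle(p)=0$, which contradicts the positive definiteness of $T_{u}$ on $M^{n}$ applied to the unit tangent vector $\nu(p)\in T_{p}M$. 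Hence no such non-transversal point can exist, proving the theorem.

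There is essentially no technical obstacle: the heavy lifting was already carried out in Propositions \ref{Prop1}--\ref{Prop4} and condensed into the formula of Corollary \ref{Cor2}. The only step requiring a brief geometric argument is the characterization of non-transversality as $\nu=\pm\eta$ (equivalently $\rho\equiv 0$), after which the conclusion is immediate from the monomial $\rho^{u}$ appearing as a factor in $\langle T_{u}\nu,\nu\rangle$.
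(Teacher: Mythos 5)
Your proof is correct and follows essentially the same route as the paper: both reduce the statement to the formula $\langle T_{u}\nu,\nu\rangle=\rho^{u}\sigma_{|u|}(A_{\Sigma})$ of Corollary \ref{Cor2} and observe that non-transversality forces $\rho_{\alpha}=0$ for all $\alpha$, hence $\rho^{u}=0$, contradicting positive definiteness of $T_{u}$ applied to $\nu$. The only difference is cosmetic: you argue by contradiction and explicitly justify the step that non-transversality (intersection strictly larger than $T_{p}\Sigma$) means $\eta=\pm\nu$ and thus $\langle\eta,N_{\alpha}\rangle=0$ for every $\alpha$, a characterization the paper simply asserts as the meaning of non-transversality.
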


\begin{proof}
Saying that $M^{n\text{ }}$and $P^{n}$ are not transversal means that there
exist $p\in\partial M$ such that for every $\alpha\in\left\{  1,...,q\right\}
$ we have%
\[
\rho_{u}=\left\langle \eta,N_{\alpha}\right\rangle =0\quad\text{at $p$}.
\]
Therefore, if we suppose that for all $p\in\overline{M}^{n+q}$, $T_{u}$ is
positive definite, then by Corollary 7, $\rho^{u}(p)\neq0$. Thus
\[
\left\langle \eta,N_{\alpha}\right\rangle \neq0,
\]
hence $M^{n\text{ }}$and $P^{n}$ are transversal.
\end{proof}

\end{document}